\numberwithin{equation}{section}
\newtheorem{teo}{Theorem}[section]
\newtheorem{prop}[teo]{Proposition}
\newtheorem{lem}[teo]{Lemma}
\newtheorem{cor}[teo]{Corollary}
\theoremstyle{remark}
\newtheorem{example}[teo]{Example}
\title[Extended Koksma's inequality]{An extension of Koksma's inequality}
\author{Martin Lind}
\address{Department of Mathematics, Karlstad University, Universitetsgatan 2, 651 88 Karlstad, Sweden}
\email{martin.lind@kau.se}
\keywords{Discrepancy; Numerical integration; quasi-Monte Carlo method; Bounded variation; Convergence rates}
\subjclass[2010]{65D30, 26B30}
\begin{document}

\begin{abstract}
    When applying the quasi-Monte Carlo (QMC) method of numerical integration of univariate functions, Koksma's inequality provides a basic estimate of the error in terms of the discrepancy of the used evaluation points and the total variation of the integrated function. We present an extension of Koksma's inequality that is also applicable for functions with infinite total variation.
\end{abstract}

\maketitle

\section{Introduction}

A natural way to numerically calculate the integral of a function $f:[0,1]\rightarrow\mathbb{R}$ is to take a sequence ${\bf x}=\{x_n\}_{n=1}^\infty\subset[0,1]$ and use the approximation
\begin{equation}
    \label{MainApproximation}
    \int_0^1f(t)dt\approx\frac{1}{N}\sum_{n=1}^N f(x_n).
\end{equation}
Introduce the error 
\begin{equation}
    \label{MainError}
    \mathcal{E}_N(f;{\bf x})=\left|\frac{1}{N}\sum_{n=1}^Nf(x_n)-\int_0^1f(t)dt\right|.
\end{equation}
In the \emph{Monte Carlo method} (MC), one takes ${\bf x}$ in (\ref{MainApproximation}) to be a sequence of random numbers sampled uniformly from $[0,1]$. The expression inside the absolute value signs of (\ref{MainError}) is then a random variable with expected value 0 and standard deviation of the order $1/\sqrt{N}$ as $N\rightarrow\infty$, see e.g. \cite{Caflisch}.

The \emph{quasi-Monte Carlo method} (QMC) is based on instead taking a deterministic ${\bf x}\subset[0,1]$ in (\ref{MainApproximation}) with good properties (more precisely described below). This can lead to a better convergence rate of (\ref{MainError}) than when taking random {\bf x}. In fact, there exist deterministic ${\bf x}\subseteq [0,1]$ such that the rate of decay of $\mathcal{E}_N(f;{\bf x})$ is close to $1/N$ as $N\rightarrow\infty$ (see below). 
Useful references for the theory and applications of the MC and QMC methods are e.g. \cite{Caflisch, DP, Owen}.

The aim of this note is to discuss error estimates for QMC on $[0,1]$. More specifically, we establish an extension of the elegant \emph{Koksma’s inequality}. Koksma’s inequality is the main general error estimate for QMC, we need some auxiliary notions in order to formulate it.

Let ${\bf x}=\{x_n\}_{n=1}^\infty\subseteq[0,1]$ and let $E\subset[0,1]$. Denote for $N\in\mathbb{N}$
$$
A_N(E,{\bf x})=\sum_{n=1}^N\chi_E(x_n),
$$
where $\chi_E(x)=1$ if $x\in E$ and $\chi_E(x)=0$ if $x\notin E$.
Note that $A_N$ counts how many of the first $N$ terms of ${\bf x}$ that belong to $E$. The \emph{extreme discrepancy} of ${\bf x}$ is defined as 
\begin{equation}
    \nonumber
    D_N({\bf x})=\sup_{0\le a<b\le1}\left|\frac{A_N([a,b),{\bf x})}{N}-(b-a)\right|.
\end{equation}
If we set $a=0$ and take supremum over all $b\in(0,1]$, then we obtain the \emph{star discrepancy} of ${\bf x}$, denoted $D_N^*({\bf x})$. It is clear that
\begin{equation}
\label{discComparison}
D_N^*({\bf x})\le D_N({\bf x})\le 2D_N^*({\bf x}) 
\end{equation} 
In a sense, the quantities $D_N, D_N^*$ measure how much the distribution of the points of ${\bf x}$ deviates from the uniform distribution.

The \emph{total $p$-variation} of a function $f:[0,1]\rightarrow\mathbb{R}$ is given by
\begin{equation}
    \nonumber
    {\rm Var}_p(f)=\sup\left(\sum_{j=1}^\infty|f(I_j)|^p\right)^{1/p},
\end{equation}
where the supremum is taken over all non-overlapping collections of intervals $\{I_j\}_{j=1}^\infty$ contained in $[0,1]$ and $f(I)=f(b)-f(a)$ if $I=[a,b]$. If ${\rm Var}_p(f)<\infty$ we say that $f$ has \emph{bounded $p$-variation} (written $f\in BV_p$). Note that 
$$
BV_1\hookrightarrow BV_p\quad(1<p<\infty).
$$
See \cite{Dudley} for a thorough discussion of bounded $p$-variation and applications.

Koksma's inequality states that
\begin{equation}
    \label{Koksma}
    \mathcal{E}_N(f;{\bf x})\le D_N^*({\bf x}){\rm Var}_1(f).
\end{equation}
In other words, the error of QMC is bounded by a product of two factors, the first
measuring the ”spread” of the sequence ${\bf x}$ and the second measuring the variation of the integrand $f$.
We mention that there is a sequence ${\bf x}_C$ called the \emph{van der Corput sequence} (see \cite{KN}) such that $D_N^*({\bf x}_C)=\mathcal{O}(\log(N)/N)$, and this is the best rate of decay one can expect on $D_N^*$ (see Section 2 below). Hence, if $f\in BV_1$ and we use the van der Corput sequence ${\bf x}_C$, then $\mathcal{E}_N(f;{\bf x}_C)=\mathcal{O}(\log(N)/N)$.

A drawback of (\ref{Koksma}) is that it provides no error estimate in the case when $f\notin BV_1$. For instance, we were originally interested in finding a general error estimates for $f\in BV_p~~(p>1)$ (see Corollary \ref{pVarCor} below). This led us to our main result (Theorem \ref{MainTeo}), which is a sharpening of (\ref{Koksma}) that is effective also when $f\notin BV_1$. In fact, Theorem \ref{MainTeo} provides an estimate of (\ref{MainError}) for \emph{any} function.
For this, we recall the notion of \emph{modulus of variation}, first introduced in \cite{Lagrange} (see also \cite{Chanturiya1}). For any $N\in\mathbb{N}$, we set
$$
\nu(f;N)=\sup\sum_{j=1}^N|f(I_j)|,
$$
where the supremum is taken over all non-overlapping collections of \emph{at most} $N$ sub-intervals of $[0,1]$. An attractive feature of the modulus of variation is that it is finite for any bounded function. Of course, $f\in BV_1$ if and only if $\nu(f;N)=\mathcal{O}(1)$ as $N\rightarrow\infty$ and the growth of $\nu(f;N)$ then tells us how "badly" a function has unbounded 1-variation. The next result is our main theorem.





\begin{teo}
\label{MainTeo}
There is an absolute constant $C>0$ such that for any function $f$ and $N\in\mathbb{N}$ there holds
\begin{equation}
    \label{KoksmaNew}
    \mathcal{E}_N(f;{\bf x})\le CD_N^*({\bf x})\nu(f;N).
\end{equation}
\end{teo}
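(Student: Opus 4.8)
The plan is to pass to the non-decreasing rearrangement of the first $N$ points and combine a summation-by-parts argument with the elementary fact that sums of oscillations of $f$ over disjoint intervals are controlled by $\nu(f;N)$. (We may assume throughout that $f$ is bounded and integrable, since otherwise $\nu(f;N)=\infty$ and there is nothing to prove.) Write $x_{(1)}\le x_{(2)}\le\cdots\le x_{(N)}$ for the non-decreasing rearrangement of $x_1,\dots,x_N$, set $x_{(0)}=0$, $x_{(N+1)}=1$, and let $\ell_k=x_{(k+1)}-x_{(k)}$ be the gap lengths. The first ingredient is a direct consequence of the definition of $D_N^*$ together with the comparison of $A_N([0,t),{\bf x})$ and $A_N([0,t],{\bf x})$: for every $k$ one has $|k/N-x_{(k)}|\le D_N^*({\bf x})$, and hence every gap satisfies $\ell_k\le 2D_N^*({\bf x})$.

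Next I would expand the integral along the rearrangement: $\int_0^1 f=\sum_{k=0}^{N}\ell_k\,\xi_k$, where $\xi_k$ denotes the mean value of $f$ on $[x_{(k)},x_{(k+1)}]$, so that $\xi_k$ lies between the infimum and the supremum of $f$ there. Replacing, for $0\le k\le N-1$, the value $\xi_k$ by $f(x_{(k+1)})$ introduces an error bounded by the oscillation of $f$ on $[x_{(k)},x_{(k+1)}]$; since these $N$ intervals are non-overlapping, the total error is at most $2D_N^*({\bf x})$ times the sum of those oscillations, which is $\le\nu(f;N)$ by definition of the modulus of variation (approximate each oscillation by $|f(s_k)-f(t_k)|$ with $s_k,t_k$ in the $k$-th interval). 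After this replacement the main term becomes $\sum_{j=1}^{N}\big(\tfrac1N-\ell_{j-1}\big)f(x_{(j)})$, whose partial sums telescope to $S_i=\tfrac iN-x_{(i)}$, an expression we have just bounded by $D_N^*({\bf x})$.

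Then I would apply Abel summation to $\sum_{j=1}^N\big(\tfrac1N-\ell_{j-1}\big)f(x_{(j)})$. This produces a boundary term $S_N f(x_{(N)})$ together with $\sum_{i=1}^{N-1}S_i\big(f(x_{(i)})-f(x_{(i+1)})\big)$; the latter is at most $D_N^*({\bf x})\sum_{i=1}^{N-1}|f(x_{(i)})-f(x_{(i+1)})|\le D_N^*({\bf x})\nu(f;N)$, again because the intervals $[x_{(i)},x_{(i+1)}]$ are non-overlapping. To dispose of the boundary term, and of the leftover contribution $\ell_N\xi_N$ from the last gap $[x_{(N)},1]$, I use that $\mathcal{E}_N$, the quantity $\nu$, and all oscillations are unchanged when $f$ is replaced by $f-f(x_{(N)})$; after this normalization the boundary term vanishes and $\ell_N\xi_N$ is bounded by $\ell_N\cdot\mathrm{osc}_{[x_{(N)},1]}f\le 2D_N^*({\bf x})\nu(f;N)$. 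Collecting the three contributions yields \eqref{KoksmaNew} with a small absolute constant.

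I expect the main obstacle to be the bookkeeping around the two endpoints of $[0,1]$: keeping track of the last gap $[x_{(N)},1]$, of the boundary term in Abel summation, and of the (harmless) coincidences $x_{(i)}=x_{(i+1)}$, while ensuring that every ``bare'' value of $f$ appearing along the way is either eliminated by the translation normalization or absorbed into an oscillation term. The structural heart of the argument — that $\{x_{(k)}\}$ is $D_N^*$-close to the equispaced grid $\{k/N\}$ and that oscillations over disjoint intervals sum to at most $\nu(f;N)$ — is elementary; an alternative packaging would be to first approximate $f$ by a step function $g$ which is constant on each $[x_{(k)},x_{(k+1)})$, with ${\rm Var}_1(g)\lesssim\nu(f;N)$ and $f-g$ small at the sample points, and then invoke classical Koksma's inequality \eqref{Koksma} for $g$.
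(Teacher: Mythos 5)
Your argument is correct, but it takes a genuinely different route from the paper. The paper approximates $f$ by the first-order spline $s_N$ interpolating $f$ at the sorted sample points, applies classical Koksma's inequality \eqref{Koksma} to $s_N$ (using Lemma \ref{approximationLemma} to bound ${\rm Var}_1(s_N)$ by $\nu(f;N)$), and then spends most of its effort estimating $\|f-s_N\|_{L^1}$ via a Chebyshev-type selection of points $y_n$ and the gap bound $\delta_N\le 4D_N^*$; this yields the constant $25$. You instead work directly with the sorted points: you replace the mean value of $f$ on each gap by the value at an endpoint at the cost of one oscillation per gap (and these $N$ non-overlapping oscillations sum to at most $\nu(f;N)$), and then run Abel summation on $\sum_j(\tfrac1N-\ell_{j-1})f(x_{(j)})$, whose partial sums $i/N-x_{(i)}$ are bounded by $D_N^*$ via Lemma \ref{discreteDiscrepancyLemma}; the normalization $f\mapsto f-f(x_{(N)})$ cleanly kills the boundary term and controls the last gap. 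This is essentially a discrete surrogate for the Zaremba identity \eqref{Zaremba} that avoids the Stieltjes integral $\int(t-n/N)\,df$ (which is exactly where the paper reports the direct approach broke down, since that integral need not exist for general $f$), and it is worth noting that it yields a noticeably better constant --- collecting your three contributions gives roughly $C=4$ or $5$ rather than $25$. The paper's spline route is more modular (reduce to \eqref{Koksma} plus an approximation error, the same template used for Theorem \ref{continuityTeo}), while yours is more self-contained and sharper. The only points requiring care, which you correctly flag, are the degenerate gaps $x_{(i)}=x_{(i+1)}$ (they contribute zero everywhere) and the fact that oscillations must be converted into endpoint differences $|f(s_k)-f(t_k)|$ over non-overlapping subintervals before invoking the definition of $\nu(f;N)$; neither causes any difficulty.
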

We can take $C=25$ in (\ref{KoksmaNew}). This value is a consequence of our method of proof and not optimal. The main point is that we can replace the total variation in (\ref{Koksma}) with a quantity that is finite for all functions.

On the other hand, we can improve the constant of (\ref{KoksmaNew}) if $f$ is continuous.
\begin{teo}
\label{continuityTeo}
For any continuous function $f$ and $N\in\mathbb{N}$ there holds
\begin{equation}
    \label{KoksmaContinuity}
    \mathcal{E}_N(f;{\bf x})\le D_N^*({\bf x})\nu(f;2N+2).
\end{equation}
\end{teo}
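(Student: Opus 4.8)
The plan is to sharpen the classical Riemann--Stieltjes derivation of Koksma's inequality (\ref{Koksma}), using the continuity of $f$ to keep the multiplicative constant equal to $1$. Write $\Delta_N=\frac1N\sum_{n=1}^Nf(x_n)-\int_0^1f(t)\,dt$, so that $\mathcal{E}_N(f;{\bf x})=|\Delta_N|$, and introduce the discrepancy function
$$
R_N(t)=\frac{A_N([0,t),{\bf x})}{N}-t,\qquad 0\le t\le 1,
$$
for which $\sup_{0\le t\le 1}|R_N(t)|=D_N^*({\bf x})$. It suffices to prove the estimate when ${\bf x}\subseteq(0,1)$: if some points of ${\bf x}$ lie in $\{0,1\}$, push them to $\varepsilon$ and $1-\varepsilon$ and let $\varepsilon\to0$; continuity of $f$ gives $\mathcal{E}_N(f;{\bf x}^\varepsilon)\to\mathcal{E}_N(f;{\bf x})$, a direct inspection of the perturbed discrepancy function gives $\limsup_{\varepsilon\to0}D_N^*({\bf x}^\varepsilon)\le D_N^*({\bf x})$ (the extra contributions near $0$ and $1$ are $O(\varepsilon)$), and $\nu(f;2N+2)$ is unchanged since it does not involve ${\bf x}$.

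Assume now ${\bf x}\subseteq(0,1)$. Then $R_N$ has bounded variation, vanishes at $0$ and at $1$, and is continuous except for upward jumps at the points of ${\bf x}$. Since $f$ is continuous, the Riemann--Stieltjes integrals below exist; writing $R_N$ as the step function $A_N([0,\cdot),{\bf x})/N$ minus the linear function $t$ gives the identity $\Delta_N=\int_0^1 f(t)\,dR_N(t)$, and integration by parts --- whose boundary term vanishes because $R_N(0)=R_N(1)=0$ --- yields
$$
\Delta_N=-\int_0^1 R_N(t)\,df(t).
$$
The key structural fact is that $R_N$ is piecewise affine of slope $-1$: if $0<t_1<\dots<t_m<1$ are the distinct values among $x_1,\dots,x_N$ (so $m\le N$) and $t_0=0$, $t_{m+1}=1$, then on each $[t_{i-1},t_i]$ the function $R_N$ coincides, except possibly at the two endpoints, with an affine function of slope $-1$; replacing it there by its one-sided limits makes it continuous and monotone on $[t_{i-1},t_i]$, and it still satisfies $|R_N|\le D_N^*({\bf x})$ on that interval (one uses that replacing $[0,b)$ by $[0,b]$ does not change the star discrepancy, so the one-sided limit $R_N(t_{i-1}^+)$ is also bounded by $D_N^*({\bf x})$).

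Now split $\int_0^1 R_N\,df$ into the $m+1\le N+1$ pieces $[t_{i-1},t_i]$ and apply the second mean value theorem for Riemann--Stieltjes integrals on each; this produces points $\xi_i\in[t_{i-1},t_i]$ with
$$
\Bigl|\int_{t_{i-1}}^{t_i}R_N(t)\,df(t)\Bigr|\le D_N^*({\bf x})\bigl(|f(\xi_i)-f(t_{i-1})|+|f(t_i)-f(\xi_i)|\bigr).
$$
Summing over $i$ and noting that the intervals $[t_{i-1},\xi_i]$ and $[\xi_i,t_i]$ for $1\le i\le m+1$ form a non-overlapping family of at most $2(m+1)\le 2N+2$ subintervals of $[0,1]$, the right-hand side is at most $D_N^*({\bf x})\,\nu(f;2N+2)$ by definition of the modulus of variation, which gives (\ref{KoksmaContinuity}). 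The steps needing the most care are the two Riemann--Stieltjes manipulations --- which is exactly why one first reduces to ${\bf x}\subseteq(0,1)$, so that $R_N$ vanishes at the endpoints and the integration-by-parts boundary term drops out --- together with the verification that the one-sided values of $R_N$ entering the mean value theorem are still controlled by $D_N^*({\bf x})$; the bookkeeping "at most $N+1$ affine pieces, two increments of $f$ per piece" is precisely what produces the argument $2N+2$.
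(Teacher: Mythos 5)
Your proof is correct, and at its core it rests on the same decomposition as the paper's: your identity $\Delta_N=-\int_0^1R_N(t)\,df(t)$ is precisely the identity \eqref{Zaremba} in disguise, since on each gap $(x_n,x_{n+1})$ one has $R_N(t)=n/N-t$, and your second mean value theorem applied gap by gap plays exactly the role of Hobson's theorem (Lemma \ref{HobsonMVT}), producing two increments of $f$ per gap and hence the same count $2N+2$. The genuine difference is in the implementation. The paper must first assume $f\in C^1$ so that $df=f'(t)\,dt$ and Hobson's theorem for Lebesgue integrals applies, and then approximate a general continuous $f$ by its Steklov means $f_h$, invoking Lemma \ref{steklovLemma} to control $\nu(f_h;k)$ and a uniform-convergence argument to pass to the limit. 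You instead stay entirely in the Riemann--Stieltjes setting, where the second mean value theorem for a monotone integrand against a continuous integrator (obtainable from integration by parts plus the first mean value theorem, since $f$ is continuous and $R_N$ is of bounded variation) applies directly, with no smoothing step at all. The price is the bookkeeping you handle explicitly and correctly: reducing to ${\bf x}\subseteq(0,1)$ so that $R_N(0)=R_N(1)=0$ and the boundary term in the integration by parts vanishes; checking that the one-sided limits of $R_N$ at the jump points are still bounded by $D_N^*({\bf x})$ (via the closed-interval form of the star discrepancy); and noting that redefining $R_N$ at finitely many points does not change $\int R_N\,df$ because $f$ is continuous. The net effect is a self-contained argument that dispenses with the entire approximation half of the paper's proof, at the cost of leaning on slightly less elementary Riemann--Stieltjes machinery.
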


We shall point out some corollaries of (\ref{KoksmaNew}). First, we have the following Koksma-type inequality for $BV_p$.
\begin{cor}
\label{pVarCor}
Let $f\in BV_p~~(1\le p<\infty)$, there is a constant $C>0$ such that for all $N\in\mathbb{R}$ there holds
\begin{equation}
    \label{pVarEst}
    \mathcal{E}_N(f;{\bf x})\le CN^{1-1/p}D_N^*({\bf x}){\rm Var}_p(f).
\end{equation}
\end{cor}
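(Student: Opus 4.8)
The plan is to deduce Corollary~\ref{pVarCor} directly from Theorem~\ref{MainTeo} by bounding the modulus of variation $\nu(f;N)$ in terms of the total $p$-variation ${\rm Var}_p(f)$. The key observation is that for any non-overlapping collection of at most $N$ intervals $\{I_j\}_{j=1}^N$ in $[0,1]$, H\"older's inequality with exponents $p$ and $p/(p-1)$ gives
\begin{equation}
\nonumber
\sum_{j=1}^N|f(I_j)|\le\left(\sum_{j=1}^N|f(I_j)|^p\right)^{1/p}\left(\sum_{j=1}^N 1\right)^{1-1/p}\le N^{1-1/p}\,{\rm Var}_p(f).
\end{equation}
Taking the supremum over all such collections yields $\nu(f;N)\le N^{1-1/p}{\rm Var}_p(f)$.

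With this estimate in hand, the corollary follows immediately: for any $N\in\mathbb{N}$, Theorem~\ref{MainTeo} gives $\mathcal{E}_N(f;{\bf x})\le C D_N^*({\bf x})\nu(f;N)\le C N^{1-1/p}D_N^*({\bf x}){\rm Var}_p(f)$, which is exactly \eqref{pVarEst} with the same absolute constant $C$ (one may take $C=25$). The case $p=1$ reduces to Koksma's inequality \eqref{Koksma} (up to the constant), since then $N^{1-1/p}=1$ and $\nu(f;N)\le{\rm Var}_1(f)$.

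I do not anticipate any real obstacle here; the only point requiring a word of care is the statement's phrasing ``$N\in\mathbb{R}$'', which should read $N\in\mathbb{N}$, since both $\nu(f;N)$ and the discrepancy $D_N^*({\bf x})$ are only defined for positive integers $N$. One might also remark that the factor $N^{1-1/p}$ is genuinely needed in general: for $f\in BV_p\setminus BV_1$ the quantity ${\rm Var}_1(f)$ is infinite, so no estimate of the form \eqref{Koksma} can hold, whereas $N^{1-1/p}{\rm Var}_p(f)$ grows only sublinearly in $N$ and, combined with $D_N^*({\bf x}_C)=\mathcal{O}(\log N/N)$ for the van der Corput sequence, still yields $\mathcal{E}_N(f;{\bf x}_C)=\mathcal{O}(N^{-1/p}\log N)\to 0$.
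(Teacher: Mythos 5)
Your proof is correct and follows exactly the paper's route: the paper's Proposition \ref{CorollaryProof} establishes $\nu(f;N)\le N^{1-1/p}{\rm Var}_p(f)$ via H\"older's inequality and then invokes Theorem \ref{MainTeo}, just as you do. Your remark that $N\in\mathbb{R}$ should read $N\in\mathbb{N}$ is also a fair catch of a typo in the statement.
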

We suspect that (\ref{pVarEst}) is known, but we have not been able to locate it in the literature. 

We shall also discuss error estimates for functions with some continuity properties. Our result here (Corollary \ref{HolderCor}) is known, see \cite{KN}, but Theorem \ref{MainTeo} allows us to derive it in a very simple way. First, we need to recall the \emph{modulus of continuity} of $f$:
$$
\omega(f;\delta)=\sup_{|x-y|\le\delta}|f(x)-f(y)|\quad(0\le\delta\le1).
$$
Let $\omega:[0,1]\rightarrow[0,\infty)$ be a non-decreasing function with $\omega(0)=0$, strictly concave and differentiable on (0,1). We denote by $H^\omega$ the class of functions such that
$$
|f|_{H^\omega}=\sup_{0<\delta\le1}\frac{\omega(f;\delta)}{\omega(\delta)}<\infty.
$$
In particular, if $\omega(\delta)=\delta^\alpha~~(0<\alpha<1)$, then $H^\omega$ is the space of $\alpha$-H\"{o}lder continuous functions.

\begin{cor}[see e.g. \cite{KN}, p. 146]
\label{HolderCor}
Let $f\in H^\omega$, there is a constant $C>0$ such that for all $N\in\mathbb{N}$ there holds
\begin{equation}
    \label{ErrorHomega}
    \mathcal{E}_N(f;{\bf x})\le C|f|_{H^\omega}N\omega\left(\frac{1}{N}\right)D_N^*({\bf x}).
\end{equation}
\end{cor}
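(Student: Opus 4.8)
The plan is to derive Corollary \ref{HolderCor} directly from Theorem \ref{MainTeo}, so that the whole task reduces to bounding $\nu(f;N)$ for $f\in H^\omega$. By \eqref{KoksmaNew} we have $\mathcal{E}_N(f;{\bf x})\le CD_N^*({\bf x})\nu(f;N)$, hence it suffices to show that
$$
\nu(f;N)\le |f|_{H^\omega}\,N\,\omega\!\left(\tfrac{1}{N}\right)
$$
for every $N$, which then yields \eqref{ErrorHomega} with the same absolute constant $C$.

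To bound $\nu(f;N)$, I would take an arbitrary non-overlapping collection $\{I_j\}_{j=1}^N$ of at most $N$ sub-intervals of $[0,1]$, say $I_j=[a_j,b_j]$, and write $t_j=b_j-a_j$. Since $f\in H^\omega$ and $\omega$ is non-decreasing, $|f(I_j)|=|f(b_j)-f(a_j)|\le\omega(f;t_j)\le |f|_{H^\omega}\,\omega(t_j)$, so that
$$
\sum_{j=1}^N|f(I_j)|\le |f|_{H^\omega}\sum_{j=1}^N\omega(t_j).
$$
Because the intervals are non-overlapping in $[0,1]$ we have $\sum_{j=1}^N t_j\le 1$. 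Now I would invoke concavity of $\omega$ together with $\omega(0)=0$: by Jensen's inequality for the concave function $\omega$,
$$
\frac{1}{N}\sum_{j=1}^N\omega(t_j)\le \omega\!\left(\frac{1}{N}\sum_{j=1}^N t_j\right)\le \omega\!\left(\frac{1}{N}\right),
$$
where the last step uses $\frac{1}{N}\sum_j t_j\le \frac{1}{N}$ and monotonicity of $\omega$. Therefore $\sum_{j=1}^N\omega(t_j)\le N\omega(1/N)$, and taking the supremum over all admissible collections gives $\nu(f;N)\le |f|_{H^\omega}\,N\,\omega(1/N)$, as required. Substituting this into \eqref{KoksmaNew} completes the proof.

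There is no serious obstacle here; the proof is short once Theorem \ref{MainTeo} is available. The only point requiring a little care is the concavity estimate: one must make sure Jensen's inequality is applied in the correct direction for concave functions and that the endpoint normalization $\omega(0)=0$ (needed to legitimately ignore degenerate or "missing" intervals when fewer than $N$ intervals are used) is in force — both of which are guaranteed by the hypotheses imposed on $\omega$. As a remark, one could alternatively use the elementary fact that a concave function vanishing at $0$ is subadditive, combined with $\omega$ non-decreasing, to reach the same bound $\sum_j\omega(t_j)\le N\omega(1/N)$ without explicitly citing Jensen; I would mention whichever argument reads most cleanly.
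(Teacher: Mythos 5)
Your proposal is correct and follows the paper's route exactly: reduce to Theorem \ref{MainTeo} and then show $\nu(f;N)\le |f|_{H^\omega}N\omega(1/N)$ by bounding $|f(I_j)|\le |f|_{H^\omega}\omega(|I_j|)$ and maximizing $\sum_j\omega(t_j)$ under $\sum_j t_j\le 1$ (this is the paper's Proposition \ref{CorollaryProof}). The only difference is in that last elementary step, where you use Jensen's inequality for the concave $\omega$ while the paper runs a Lagrange-multiplier computation; your version is slightly cleaner in that it needs only concavity and monotonicity of $\omega$, not its differentiability.
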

Corollaries \ref{pVarCor} and \ref{HolderCor} follow from simple estimates of the quantity $\nu(f;N)$ (see Proposition \ref{CorollaryProof} below).

It is perhaps not clear that Theorem \ref{MainTeo} really provides any additional information compared to Koksma's inequality (\ref{Koksma}), or the previously known results Corollaries \ref{pVarCor}-\ref{HolderCor}. In the following example, we show that (\ref{KoksmaNew}) can give a better convergence rate of $\mathcal{E}_N$ than (\ref{Koksma}), (\ref{pVarEst}) or (\ref{ErrorHomega}).
\begin{example}
Let $g(x)=x\sin(1/x)$, $x\in(0,1]$ and $g(0)=0$. It is easy to see that $g\in BV_p$ for any $p>1$ but $g\notin BV_1$. Hence, (\ref{Koksma}) does not apply. On the other hand, by Corollary \ref{pVarCor}
$$
\mathcal{E}_N(g;{\bf x}_C)\le C(\alpha)\frac{\log N}{N^\alpha}
$$
for any $\alpha<1$, where ${\bf x}_C$ is the van der Corput sequence. Here, $C(\alpha)\rightarrow\infty$ as $\alpha\rightarrow1-$. (This result can also be obtained from Corollary \ref{HolderCor} since $\omega(g;\delta)=\mathcal{O}(\delta^\alpha)$ for any $\alpha\in(0,1)$.) However, we get a sharper result if we use Theorem \ref{MainTeo} directly. Indeed, it is not difficult to see that $\nu(g;N)=\mathcal{O}(\log(N))$. Whence,
$$
\mathcal{E}_N(g;{\bf x}_C)=\mathcal{O}\left(\frac{(\log N)^2}{N}\right),
$$
where the implied constant is absolute.
\end{example}


\section{Auxiliary results}
In this section, we collect some auxiliary results that will be useful to us.

For the sake of the reader, we  start by briefly recalling some facts from  discrepancy theory. We follow closely the presentation given in \cite{KN}.

Let $N$ be fixed and consider a finite sequence ${\bf x}=\{x_n:1\le n\le N\}$. When calculating the discrepancy, the order of the elements does not matter. Thus, we can always assume that $x_1\le x_2\le...\le x_N$. Furthermore, we also have
\begin{lem}{\cite[p. 91]{KN}}
    \label{discreteDiscrepancyLemma}
    Assume that $x_1\le x_2\le...\le x_N$, then 
    \begin{eqnarray}
        \label{discreteDiscrepancy}
        D_N^*({\bf x})&=&\max_{1\le n\le N}\left\{\left|x_n-\frac{n-1}{N}\right|,\left|x_n-\frac{n}{N}\right|\right\}\\
        \nonumber
        &=&\frac{1}{2N}+\max_{1\le n\le N}\left|x_n-\frac{2n-1}{2N}\right|
    \end{eqnarray}
\end{lem}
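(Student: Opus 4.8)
The plan is to prove Lemma~\ref{discreteDiscrepancyLemma} directly from the definition of $D_N^*({\bf x})$, exploiting the fact that for a fixed finite sequence with $x_1\le\dots\le x_N$ the counting function $A_N([0,b),{\bf x})$ is a step function of $b$ that jumps by $1$ at each $x_n$. First I would observe that $D_N^*({\bf x})=\sup_{0<b\le1}|A_N([0,b),{\bf x})/N-b|$, and that on each interval $b\in(x_n,x_{n+1}]$ (with the conventions $x_0=0$, $x_{N+1}=1$) the quantity $A_N([0,b),{\bf x})$ is constant and equal to $n$; hence on that interval the function $g(b)=n/N-b$ is linear and decreasing, so its absolute value attains its supremum over the half-open interval at one of the endpoints. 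This reduces the supremum over the continuum of $b$ to a maximum over the finitely many values $g(x_n^{+})$ and $g(x_{n+1})$.

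Next I would carefully track which endpoint values actually occur. At $b$ slightly larger than $x_n$ we have $A_N/N = n/N$, giving the value $|n/N - x_n|$ (taking the limit, since the sup over the open left endpoint is approached but the relevant bound $|x_n - n/N|$ is still the supremum over that side because $g$ is continuous from the right there in the sense of the limiting value); at $b=x_{n}$ itself, approaching from the left, $A_N/N=(n-1)/N$, giving $|x_n-(n-1)/N|$. Collecting these over all $n$ from $1$ to $N$ yields exactly
\[
D_N^*({\bf x})=\max_{1\le n\le N}\left\{\left|x_n-\frac{n-1}{N}\right|,\ \left|x_n-\frac{n}{N}\right|\right\},
\]
which is the first equality. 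A small point to handle is the boundary terms: at $n=N$ the term $|x_N-N/N|=|x_N-1|$ appears, and at $b\to 0^+$ one needs $|0-0|=0$, so these contribute nothing extra, consistent with the claimed formula. I would present this as a short explicit verification rather than belabor it.

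For the second equality I would simply note the elementary identity: for any real $x$ and any $n$,
\[
\max\left\{\left|x-\frac{n-1}{N}\right|,\left|x-\frac{n}{N}\right|\right\}
=\left|x-\frac{2n-1}{2N}\right|+\frac{1}{2N},
\]
since $\tfrac{n-1}{N}$ and $\tfrac{n}{N}$ are symmetric about their midpoint $\tfrac{2n-1}{2N}$ at distance $\tfrac{1}{2N}$, and the maximum of the distances from a point to two symmetric points equals the distance to the midpoint plus the half-gap. Taking the maximum over $n$ of both sides and pulling the constant $1/(2N)$ out gives the stated second form.

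The main obstacle, such as it is, is purely bookkeeping: being careful about half-open versus closed intervals in the definition of the star discrepancy and about whether the relevant suprema are attained or merely approached, so that the ``$\sup$'' in the definition legitimately becomes a ``$\max$'' over the finite index set. Since this is a known result quoted from \cite{KN}, I would keep the argument brief, emphasizing the reduction to endpoint values and the symmetry identity, and not dwell on the $\varepsilon$-level details.
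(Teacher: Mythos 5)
The paper does not prove this lemma; it is quoted from \cite[p.~91]{KN}, and your argument is precisely the standard proof: on each nonempty interval $b\in(x_n,x_{n+1}]$ the counting function is constantly $n$, so the supremum of the affine function $|n/N-b|$ reduces to its endpoint values (the boundary terms $|x_0-0|$ and $|x_{N+1}-N/N|$ vanishing), and the second equality is the midpoint identity you state. The one detail worth a line in a written version is the case of ties $x_n=x_{n+1}$, where the term $\left|x_n-\tfrac{n}{N}\right|$ is not an endpoint value of a nonempty subinterval but is still dominated by $\max\left\{\left|x_n-\tfrac{n-1}{N}\right|,\left|x_{n+1}-\tfrac{n+1}{N}\right|\right\}$ since $\tfrac{n}{N}$ lies between $\tfrac{n-1}{N}$ and $\tfrac{n+1}{N}$, so the stated maximum is not inflated.
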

Hence, for any finite sequence ${\bf x}$ of $N$ points, we have
\begin{equation}
    \label{lowerFinite}
    D^*_N({\bf x})\ge\frac{1}{2N},
\end{equation}
with equality if and only if ${\bf x}$ is a permutation of $\{(2n-1)/(2N): 1\le n\le N\}$.

For an infinite sequence, the optimal rate of decay of the discrepancy is $\mathcal{O}(\log(N)/N)$. This rate is attained by the above mentioned van der Corput sequence ${\bf x}_C$. The fact that $\log(N)/N$ is optimal is due to the following result of Schmidt (see e.g. \cite[p. 109]{KN}):
for any infinite sequence ${\bf x}$ there holds
$$
\liminf_{N\rightarrow\infty}\frac{ND_N^*({\bf x})}{\log(N)}>0.
$$

Assume as above that $x_1\le x_2\le...\le x_N$, and set $x_0=0$ and $x_{N+1}=1$.
Then the following identity holds (see \cite[p. 143]{KN})
\begin{equation}
    \label{Zaremba}
    \frac{1}{N}\sum_{n=1}^Nf(x_n)-\int_0^1f(t)dt=\sum_{n=0}^N\int_{x_n}^{x_{n+1}}\left(t-\frac{n}{N}\right)df.
\end{equation}
Koksma's inequality (\ref{Koksma}) follows easily from (\ref{Zaremba}) and (\ref{discreteDiscrepancy}). The author could not derive (\ref{KoksmaNew}) from (\ref{Zaremba}) and thus found the argument presented in the proof of Theorem \ref{MainTeo} below. However, the proof of Theorem \ref{continuityTeo} \emph{does} make essential use of (\ref{Zaremba}) together with the following mean value theorem due to Hobson \cite{Hobson} (see also \cite{Dixon}).
\begin{lem}
\label{HobsonMVT}
Let $\varphi$ be a non-constant function that is monotone on the open interval $(a,b)$ and let $f$ be integrable on $(a,b)$. Then there exists $c\in(a,b)$ such that
\begin{equation}
\nonumber
\int_a^b\varphi(t)f(t)dt=\varphi(a+)\int_a^cf(t)dt+\varphi(b-)\int_c^bf(t)dt,
\end{equation}
where
$$
\varphi(a+)=\lim_{h\rightarrow0, h>0}\varphi(a+h)\quad{\rm and}\quad \varphi(b-)=\lim_{h\rightarrow0, h>0}\varphi(b-h).
$$
\end{lem}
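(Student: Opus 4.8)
The plan is to prove this by reducing the identity to one application of Riemann--Stieltjes integration by parts, followed by an intermediate value argument.

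I would begin with two harmless reductions. Replacing $\varphi$ by $-\varphi$ if necessary (this flips the sign of both sides of the asserted identity and leaves $c$ unchanged), we may assume $\varphi$ is non-decreasing; and since the conclusion involves $\varphi(a+)$ and $\varphi(b-)$ we may assume $\varphi$ is bounded, so these one-sided limits are finite and $\varphi$ has bounded variation on $(a,b)$. Extend $\varphi$ to the closed interval by setting $\varphi(a):=\varphi(a+)$ and $\varphi(b):=\varphi(b-)$; this keeps $\varphi$ non-decreasing, introduces no jump of $\varphi$ at $a$ or $b$, and does not change $\int_a^b\varphi(t)f(t)\,dt$. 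Put $F(x)=\int_a^x f(t)\,dt$, which is continuous on $[a,b]$ with $F(a)=0$. Since $\varphi$ is monotone and $F$ is continuous, the Riemann--Stieltjes integral $\int_a^b\varphi\,dF$ exists and equals $\int_a^b\varphi(t)f(t)\,dt$.

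Next I would integrate by parts in the Stieltjes integral:
\begin{equation}
\nonumber
\int_a^b\varphi(t)f(t)\,dt=\int_a^b\varphi\,dF=\varphi(b)F(b)-\varphi(a)F(a)-\int_a^bF\,d\varphi=\varphi(b-)F(b)-\int_a^bF\,d\varphi,
\end{equation}
using $F(a)=0$ and the extended endpoint values. The Stieltjes measure $d\varphi$ is non-negative with total mass $L:=\varphi(b-)-\varphi(a+)$, and $L>0$ precisely because $\varphi$ is non-constant; moreover it carries no mass at $a$ or $b$, so $\int_a^bF\,d\varphi=\int_{(a,b)}F\,d\varphi$ is $L$ times a weighted average of the values of $F$ on the open interval. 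Hence $\tfrac1L\int_{(a,b)}F\,d\varphi$ lies between $\inf_{(a,b)}F$ and $\sup_{(a,b)}F$, and since $F$ is continuous the intermediate value theorem yields $c\in(a,b)$ with $F(c)=\tfrac1L\int_{(a,b)}F\,d\varphi$. (In the borderline case where this average equals $\inf_{(a,b)}F$ or $\sup_{(a,b)}F$, the function $F-\inf_{(a,b)}F$ respectively $\sup_{(a,b)}F-F$ is non-negative with vanishing $d\varphi$-integral, so $F$ equals that value on a set of positive $d\varphi$-measure and thus at some point of $(a,b)$.) Substituting back gives
\begin{equation}
\nonumber
\int_a^b\varphi(t)f(t)\,dt=\varphi(b-)F(b)-\bigl(\varphi(b-)-\varphi(a+)\bigr)F(c),
\end{equation}
and rewriting $F(c)=\int_a^c f$ and $F(b)-F(c)=\int_c^b f$ turns the right-hand side into $\varphi(a+)\int_a^c f+\varphi(b-)\int_c^b f$, as required.

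I expect the main obstacle to be the book-keeping at the endpoints rather than anything deep: one must arrange that the boundary terms from the integration by parts appear as the one-sided limits $\varphi(a+),\varphi(b-)$ (this is exactly Hobson's refinement of Weierstrass's form, and is what permits $\varphi$ to be defined only on the open interval), and one must verify that $c$ can be taken strictly inside $(a,b)$ — which is why it matters that the average of $F$ is taken against a measure supported in $(a,b)$. An alternative to the Stieltjes integration by parts is the classical Abel-summation argument applied to Riemann sums $\sum_i\varphi(\xi_i)\bigl(F(t_i)-F(t_{i-1})\bigr)$ with tags $\xi_i$ chosen in the open subintervals, followed by a passage to the limit; this is more elementary but runs into the same endpoint subtleties.
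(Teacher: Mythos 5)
Your argument is correct, but note that the paper does not prove this lemma at all: it is quoted as a known result (the second mean value theorem in Hobson's form) with citations to Hobson and Dixon, so there is no in-paper proof to compare against. Your proof is the standard modern one: reduce to $\varphi$ non-decreasing and bounded, extend to the closed interval by the one-sided limits so that the Stieltjes measure $d\varphi$ has no atoms at $a$ or $b$, integrate by parts against $F(x)=\int_a^x f$, and recover $c$ by applying the intermediate value theorem to the $d\varphi$-average of $F$; you also correctly handle the borderline case where that average equals $\inf_{(a,b)}F$ or $\sup_{(a,b)}F$, which is exactly the point needed to place $c$ strictly inside $(a,b)$. The only step you pass over quickly is the identification of the Riemann--Stieltjes integral $\int_a^b\varphi\,dF$ with the Lebesgue integral $\int_a^b\varphi(t)f(t)\,dt$ when $f$ is merely integrable; this is standard (estimate $\sum_i\int_{t_{i-1}}^{t_i}|\varphi(\xi_i)-\varphi(t)||f(t)|\,dt$ by $\bigl(\varphi(b)-\varphi(a)\bigr)\max_i\int_{t_{i-1}}^{t_i}|f|$ and use absolute continuity of the integral), but it deserves a sentence. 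With that addendum the proof is complete and matches what the cited sources establish.
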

In proving Theorems \ref{MainTeo} and \ref{continuityTeo}, we use some approximation arguments and need to control the variation of the approximants. This is accomplished by the following two simple lemmas. 
\begin{lem}
\label{approximationLemma}
Let $M\in\mathbb{N}$ and let $s_M$ be the continuous first-order spline interpolating $f$ at the knots
$0=x_0<x_1<...<x_M=1$. Then
\begin{equation}
\label{varIneq1}
{\rm Var}_1(s_M)\le\nu(f;M).
\end{equation}
\end{lem}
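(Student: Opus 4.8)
The plan is to exploit the fact that a first-order spline is piecewise linear, so its total variation over any subinterval between consecutive knots equals the absolute value of the increment of $f$ across that knot-interval. Concretely, on each $[x_{j-1},x_j]$ the spline $s_M$ is the affine function taking the values $f(x_{j-1})$ and $f(x_j)$ at the endpoints, hence it is monotone there and ${\rm Var}_1(s_M;[x_{j-1},x_j])=|f(x_j)-f(x_{j-1})|=|f(I_j)|$ where $I_j=[x_{j-1},x_j]$. First I would record this observation, together with the elementary additivity of total variation over a partition into adjacent intervals, to get
\begin{equation}
\nonumber
{\rm Var}_1(s_M)=\sum_{j=1}^M {\rm Var}_1(s_M;[x_{j-1},x_j])=\sum_{j=1}^M |f(x_j)-f(x_{j-1})|.
\end{equation}

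Next I would observe that $\{I_j\}_{j=1}^M$ is a non-overlapping collection of exactly $M$ subintervals of $[0,1]$, so by the very definition of the modulus of variation the right-hand side is at most $\nu(f;M)$. This immediately yields \eqref{varIneq1}. One small point worth stating carefully: the additivity step requires that the partition points used to compute ${\rm Var}_1(s_M)$ may always be refined to include all the knots $x_j$, which is the standard fact that inserting points into a partition does not decrease the variation sum; since $s_M$ is continuous this causes no trouble.

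There is essentially no obstacle here — the lemma is a direct consequence of the piecewise-linear structure of $s_M$. The only thing to be mildly careful about is not to conflate "at most $M$ intervals'' in the definition of $\nu(f;M)$ with "exactly $M$'': using exactly $M$ intervals is allowed, so the bound goes through as stated, and in fact one even has equality when $f$ is itself monotone on each $[x_{j-1},x_j]$ with matching signs, though we do not need that.
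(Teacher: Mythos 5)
Your proof is correct and follows essentially the same route as the paper: both reduce ${\rm Var}_1(s_M)$ to a finite sum of increments of $f$ over at most $M$ non-overlapping knot intervals and then invoke the definition of $\nu(f;M)$. The only cosmetic difference is that the paper sums over the local extrema of $s_M$ rather than over all $M$ knot intervals; by additivity of variation and the piecewise monotonicity of $s_M$ the two sums coincide, so your version is, if anything, slightly more direct.
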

\begin{proof}
Let $\{x_{n_k}\}$ be the subset of $\{x_n\}$ consisting of points of local extremum of $s_M$. It is easy to see that
$$
{\rm Var}_1(s_M)=\sum_k|s_M(x_{n_{k+1}})-s_M(x_{n_k})|=\sum_k|f(x_{n_{k+1}})-f(x_{n_k})|\le \nu(f;M),
$$
where the last inequality holds since the sum extends over at most $M$ terms.
\end{proof}
\begin{lem}
\label{steklovLemma}
Let $h>0$ be fixed but arbitrary and set
\begin{equation}
    \label{steklovMean}
    f_h(x)=\frac{1}{h}\int_0^hf(x+t)dt
\end{equation}
for $x\in[0,1]$ (assume that $f(x)=f(1)$ for $x\in[1,1+h]$). Then
\begin{equation}
    \nonumber
    \nu(f_h;k)\le\nu(f;k)
\end{equation}
for any $k\in\mathbb{N}$.
\end{lem}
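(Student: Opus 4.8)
The plan is to bound the increment of $f_h$ over an interval by an average of increments of $f$ over translated intervals, and then integrate in $t$. Fix a non-overlapping collection $I_1,\dots,I_m$ of subintervals of $[0,1]$ with $m\le k$, and write $I_j=[a_j,b_j]$. From \eqref{steklovMean} we have
\[
f_h(I_j)=f_h(b_j)-f_h(a_j)=\frac1h\int_0^h\bigl(f(b_j+t)-f(a_j+t)\bigr)\,dt ,
\]
hence $|f_h(I_j)|\le\frac1h\int_0^h|f(b_j+t)-f(a_j+t)|\,dt$. Summing over $j$ and interchanging the finite sum with the integral gives
\[
\sum_{j=1}^m|f_h(I_j)|\le\frac1h\int_0^h\Bigl(\sum_{j=1}^m|f(b_j+t)-f(a_j+t)|\Bigr)\,dt .
\]

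Next I would fix $t\in[0,h]$ and look at the translated intervals $I_j+t=[a_j+t,b_j+t]$: they are again non-overlapping, at most $k$ in number, and contained in $[0,1+h]$. Using the stated convention $f(x)=f(1)$ for $x\in[1,1+h]$, one checks that any non-overlapping family of at most $k$ intervals inside $[0,1+h]$ has total increment at most $\nu(f;k)$---indeed, replacing each such interval by its intersection with $[0,1]$ and discarding those lying in $[1,1+h]$ leaves all increments unchanged, since $f$ is constant past $1$, and produces an admissible family for $\nu(f;k)$. Applying this observation to $\{I_j+t\}_{j=1}^m$ yields $\sum_{j=1}^m|f(b_j+t)-f(a_j+t)|\le\nu(f;k)$ for every $t\in[0,h]$. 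Substituting into the previous display gives $\sum_{j=1}^m|f_h(I_j)|\le\nu(f;k)$, and taking the supremum over all admissible collections $\{I_j\}$ completes the proof.

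The computation itself is immediate; the only step requiring a little care is the endpoint bookkeeping just described---verifying that the flat extension of $f$ beyond $1$ does not enlarge its modulus of variation, so that the bound on $\sum_j|f(I_j+t)|$ stays valid even when some translated intervals protrude past $1$. (I would also note at the outset that $f_h$ is well defined, the mean in \eqref{steklovMean} making sense for the bounded, integrable functions to which the lemma is applied, so there is no integrability issue in the interchange of sum and integral above.)
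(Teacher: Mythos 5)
Your proof is correct and follows essentially the same route as the paper: write $f_h(b_j)-f_h(a_j)$ as an average of translated increments of $f$, interchange sum and integral, and bound the inner sum by $\nu(f;k)$. The paper's version omits the endpoint bookkeeping you spell out (that the constant extension past $1$ does not enlarge the modulus of variation), which is a worthwhile clarification but not a different argument.
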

\begin{proof}
Let $\{[a_n,b_n]:1\le n\le k\}$ be an arbitrary collection of $k$ non-overlapping intervals. Then
\begin{eqnarray}
    \nonumber
    \sum_{n=1}^k|f_h(b_n)-f_h(a_n)|&=&\sum_{n=1}^k\left|\frac{1}{h}\int_0^h[f(b_n+t)-f(a_n+t)]dt\right|\\
    \nonumber
    &\le&\frac{1}{h}\int_0^h\sum_{n=1}^k|f(b_n+t)-f(a_n+t)|dt\\
    \nonumber
    &\le&\nu(f;k).
\end{eqnarray}
\end{proof}

\section{Proofs}



In this section we prove our results. Throughout the section, ${\bf x}$ is an infinite sequence and $N\in\mathbb{N}$ a fixed number.

\begin{proof}[Proof of Theorem \ref{MainTeo}]
Without loss of generality, we may assume that 
$$
x_1<x_2<...<x_N.
$$ 
Set $x_0=0$ and $x_{N+1}=1$ and let $s_N$ be the continuous first-order spline interpolating $f$ at the knots $x_0, x_1,...,x_N, x_{N+1}$.
Since $f(x_n)=s_N(x_n)$ for $0\le n\le N+1$, we have
$$
\frac{1}{N}\sum_{n=1}^N f(x_n)-\int_0^1f(t)dt=\frac{1}{N}\sum_{n=1}^N s_N(x_n)-\int_0^1s_N(t)dt+\int_0^1[s_N(t)-f(t)]dt.
$$
Hence,
$$
\mathcal{E}_N(f;{\bf x})\le\mathcal{E}_N(s_N,{\bf x})+\|f-s_N\|_{L^1(0,1)}.
$$
By (\ref{Koksma}) and Lemma \ref{approximationLemma}, we have
\begin{equation}
    \label{errorEst}
    \mathcal{E}_N(f;{\bf x})\le D_N^*({\bf x})\nu(f;N)+\|f-s_N\|_{L^1(0,1)}.
\end{equation}
We shall estimate $\|f-s_N\|_{L^1(0,1)}$. We have
$$
\|f-s_N\|_{L^1(0,1)}=\sum_{n=0}^{N}\int_{x_n}^{x_{n+1}}|f(t)-s_N(t)|dt.
$$
Observe now that for each $n=0,1,...,N$, there are $y_n\in(x_n,x_{n+1})$ such that
\begin{equation}
    \label{contra}
   \int_{x_n}^{x_{n+1}}|f(t)-s_N(t)|dt\le2(x_{n+1}-x_n)|f(y_n)-s_N(y_n)|. 
\end{equation}
Indeed, assume that (\ref{contra}) is false for some $k$, i.e.,
$$
\frac{1}{(x_{k+1}-x_k)}\int_{x_k}^{x_{k+1}}|f(t)-s_N(t)|dt> 2|f(t)-s_N(t)|
$$
holds for all $t\in(x_k,x_{k+1})$. Integrating the previous inequality over $(x_k,x_{k+1})$ gives
$$
\int_{x_k}^{x_{k+1}}|f(t)-s_N(t)|dt\ge 2\int_{x_k}^{x_{k+1}}|f(t)-s_N(t)|dt,
$$
which is a contradiction since we may assume that $\{f\neq s_N\}$ has positive measure. 
Thus, by (\ref{contra}),
\begin{equation}
    \label{L1estimate}
    \|f-s_N\|_{L^1(0,1)}\le2\delta_N({\bf x})\sum_{n=0}^{N}|f(y_n)-s_N(y_n)|,
\end{equation}
where $\delta_N({\bf x})=\max_{0\le n\le N}(x_{n+1}-x_n)$.
We shall first prove that
\begin{equation}
    \label{largestInterval}
    \delta_N({\bf x})\le 4D_N^*({\bf x}).
\end{equation}
For $1\le n\le N-1$, we have
\begin{eqnarray}
    \nonumber
    x_{n+1}-x_n&\le&\left|x_n-\frac{2n-1}{2N}\right|+\left|x_{n+1}-\frac{2n+1}{2N}\right|+\frac{1}{N}\\
    \nonumber
    &\le& 2D_N^*({\bf x}),
\end{eqnarray}
by (\ref{lowerFinite}) and (\ref{discreteDiscrepancy}).
For $n=0$, set $J_0=[x_0,x_1)$ and observe that
$$
x_1-x_0=|J_0|=\left|\frac{A_N(J_0,{\bf x})}{N}-|J_0|\right|\le D_N({\bf x})\le 2D_N^*({\bf x}),
$$
by (\ref{discComparison}). Finally, for $n=N$, set $J_N=[x_N,x_{N+1})$ and observe that $A_N(J_N,{\bf x})=1$. Thus,
\begin{eqnarray}
    \nonumber
    x_{N+1}-x_N=|J_N|&\le&\left|\frac{A_N(J_N,{\bf x})}{N}-|J_N|\right|+\frac{1}{N}\le D_N({\bf x})+\frac{1}{N}\\
    \nonumber
    &\le& 2D_N^*({\bf x})+\frac{1}{N}\\
    \nonumber
    &\le& 4D_N^*({\bf x}).
\end{eqnarray}
This proves (\ref{largestInterval}). Hence, by (\ref{L1estimate}), we have
$$
\|f-s_N\|_{L^1(0,1)}\le 8D_N^*({\bf x})\sum_{n=0}^N|f(y_n)-s_N(y_n)|.
$$
Furthermore, 
\begin{eqnarray}
    \nonumber
    |f(y_n)-s_N(y_n)|&\le& |f(y_n)-f(x_n)|+|f(x_n)-s_N(y_n)|\\
    \nonumber
    &=&|f(y_n)-f(x_n)|+|s_N(x_n)-s_N(y_n)|
\end{eqnarray}
and it follows that
\begin{eqnarray}
    \nonumber
    \sum_{n=0}^N|f(y_n)-s_N(y_n)|&\le&\sum_{n=0}^N|f(y_n)-f(x_n)|+\sum_{n=0}^N|s_N(x_n)-s_N(y_n)|\\
    \nonumber
    &\le&\nu(f;N+1)+{\rm Var}_1(s_N)\\
    \nonumber
    &\le&\nu(f;N+1)+\nu(f;N)\\
    \nonumber
    &\le& 3\nu(f;N),
\end{eqnarray}
where we used the obvious inequality $\nu(f;N+1)\le 2\nu(f;N)$.
Consequently,
$$
\|f-s_N\|_{L^1(0,1)}\le24 D_N^*({\bf x})\nu(f;N).
$$
and by (\ref{errorEst}) we obtain 
$$
\mathcal{E}_N(f;{\bf x})\le 25D_N^*({\bf x})\nu(f;N).
$$
\end{proof}

\begin{proof}[Proof of Theorem \ref{continuityTeo}]

We will use the identity (\ref{Zaremba}).
Assume first that $f\in C^1(0,1)$  (i.e. has a continuous derivative on $(0,1)$). In this case
$$
\int_{x_n}^{x_{n+1}}\left(t-\frac{n}{N}\right)df=\int_{x_n}^{x_{n+1}}\left(t-\frac{n}{N}\right)f'(t)dt,
$$
see e.g. \cite{WZ}.
Since $t\mapsto(t-n/N)$ is monotone on $(x_n,x_{n+1})$ and $f'$ is integrable, we may use Lemma \ref{HobsonMVT} to get
$$
\int_{x_n}^{x_{n+1}}\left(t-\frac{n}{N}\right)df=(x_n-n/N)\int_{x_n}^{y_n}f'(t)dt+(x_{n+1}-n/N)\int_{y_n}^{x_{n+1}}f'(t)dt
$$
for some $y_n\in(x_n,x_{n+1})$.
Hence, by (\ref{discreteDiscrepancy}) and the fundamental theorem of calculus, we have for $n=0,1,...,N$
\begin{eqnarray}
\nonumber
\left|\int_{x_n}^{x_{n+1}}\left(t-\frac{n}{N}\right)df\right|&\le&\left|x_n-\frac{n}{N}\right|\left|\int_{x_n}^{y_n}f'(t)dt\right|+\left|x_{n+1}-\frac{n}{N}\right|\left|\int_{y_n}^{x_{n+1}}f'(t)dt\right|\\
\nonumber
&\le& D_N^*({\bf x})\left(|f(J'_n)|+|f(J''_n)|\right),
\end{eqnarray}
where $J_n'=[x_n,y_n]$ and $J_n''=[y_n,x_{n+1}]$. It follows that
\begin{eqnarray}
\nonumber
\mathcal{E}_N(f;{\bf x})&\le&D_N^*({\bf x})\sum_{n=0}^N\left(|f(J_n')|+|f(J_n'')|\right)\\
\nonumber
&\le& D_N^*({\bf x})\nu(f;2N+2).
\end{eqnarray}
Thus, (\ref{KoksmaContinuity}) holds for any $C^1$-function. To conclude the argument, we approximate a continuous $f$ with functions in $C^1$ in the following way. Fix $h>0$ and let $f_h$ be given by (\ref{steklovMean}). Since $f$ is continuous, $f_h\in C^1(0,1)$. Hence, for any $h>0$
\begin{equation}
\label{MainTeo2ineq1}
\mathcal{E}_N(f_h;{\bf x})\le D^*_N({\bf x})\nu(f_h;2N+2).
\end{equation}
Further, by Lemma \ref{steklovLemma}, we have for any $k\in\mathbb{N}$
\begin{equation}
\label{MainTeo2ineq2}
\nu(f_h;k)\le\nu(f;k)
\end{equation}
Combining (\ref{MainTeo2ineq1}) and (\ref{MainTeo2ineq2}), we get
\begin{equation}
\label{MainTeo2ineq3}
\mathcal{E}_N(f_h;{\bf x})\le D^*_N({\bf x})\nu(f;2N+2)
\end{equation}
for any $h>0$. It is easy to see that $\|f-f_h\|_\infty\le\omega(f;h)$ (where $\omega(f;h)$ denotes the modulus of continuity of $f$), whence $f_h\rightarrow f$ uniformly since $f$ is continuous. Uniform convergence allows us to interchange the order of limit and integration to obtain
$$
\lim_{h\rightarrow0+}\mathcal{E}_N(f_h;{\bf x})=\mathcal{E}_N(f;{\bf x}).
$$
Hence, letting $h\rightarrow0+$ in (\ref{MainTeo2ineq3}), we obtain (\ref{KoksmaContinuity}).
\end{proof}

The next result proves the corollaries \ref{pVarCor} and \ref{HolderCor}.
\begin{prop}
    \label{CorollaryProof}
    Let $N\in\mathbb{N}$, then we have
    \begin{equation}
        \label{pEstim}
        \nu(f;N)\le N^{1-1/p}{\rm Var}_p(f)\quad(1\le p<\infty),
    \end{equation}
    and
    \begin{equation}
        \label{holderEstim}
        \nu(f;N)\le |f|_{H^\omega}N\omega\left(\frac{1}{N}\right).
    \end{equation}
\end{prop}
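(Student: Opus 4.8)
The plan is to prove the two inequalities independently; each one reduces to a classical convexity estimate applied to an arbitrary admissible collection of intervals, followed by taking a supremum over all such collections.

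For (\ref{pEstim}) I would fix a non-overlapping family $\{I_j\}_{j=1}^k$ of sub-intervals of $[0,1]$ with $k\le N$ and estimate $\sum_{j=1}^k|f(I_j)|$. Writing $|f(I_j)|=|f(I_j)|\cdot 1$ and applying H\"older's inequality with exponents $p$ and $p/(p-1)$ gives
\[
\sum_{j=1}^k|f(I_j)|\le\Big(\sum_{j=1}^k|f(I_j)|^p\Big)^{1/p}k^{1-1/p}\le k^{1-1/p}\,{\rm Var}_p(f)\le N^{1-1/p}\,{\rm Var}_p(f),
\]
where the last two steps use the definition of ${\rm Var}_p(f)$ and the monotonicity of $t\mapsto t^{1-1/p}$. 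Taking the supremum over all such families yields (\ref{pEstim}); the case $p=1$ is in any case immediate, since then any collection of at most $N$ intervals is a competitor in the definition of ${\rm Var}_1(f)$.

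For (\ref{holderEstim}) I would again fix a non-overlapping family $\{[a_j,b_j]\}_{j=1}^k$ with $k\le N$, so that $\sum_{j=1}^k(b_j-a_j)\le 1$. Since $f\in H^\omega$ and $\omega$ is non-decreasing, $|f([a_j,b_j])|\le\omega(f;b_j-a_j)\le|f|_{H^\omega}\,\omega(b_j-a_j)$, and it remains to bound $\sum_{j=1}^k\omega(\delta_j)$ for $\delta_j\ge 0$ with $\sum_j\delta_j\le 1$. Here Jensen's inequality for the concave function $\omega$ gives $\frac1k\sum_j\omega(\delta_j)\le\omega\big(\frac1k\sum_j\delta_j\big)\le\omega(1/k)$, hence $\sum_{j=1}^k\omega(\delta_j)\le k\,\omega(1/k)$. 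Finally, because $\omega$ is concave with $\omega(0)=0$ the quotient $u\mapsto\omega(u)/u$ is non-increasing, equivalently $t\mapsto t\,\omega(1/t)$ is non-decreasing, so $k\,\omega(1/k)\le N\,\omega(1/N)$. Combining these bounds and taking the supremum over admissible families gives (\ref{holderEstim}).

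The only point that requires a moment's care is the passage from a family of $k\le N$ intervals to a bound expressed purely in terms of $N$, which rests on the monotonicity of $t\mapsto t\,\omega(1/t)$ — a standard consequence of concavity together with $\omega(0)=0$. Apart from this, the argument is entirely routine: H\"older's inequality for (\ref{pEstim}) and Jensen's inequality for (\ref{holderEstim}), both applied termwise. I do not anticipate any genuine obstacle.
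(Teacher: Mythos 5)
Your proof is correct. For (\ref{pEstim}) you do exactly what the paper does: H\"older's inequality applied termwise, which the paper dispatches in one line. For (\ref{holderEstim}) you take a genuinely different route. The paper sets up the constrained maximization $M_\omega(N)=\max\sum_{j=1}^N\omega(t_j)$ subject to $\sum t_j=1$ and solves it by Lagrange multipliers, using the standing assumptions that $\omega$ is differentiable and strictly concave to conclude that the maximum is attained at the equidistribution $t_j=1/N$. You instead apply Jensen's inequality directly to the concave $\omega$, obtaining $\sum_{j=1}^k\omega(\delta_j)\le k\,\omega(1/k)$ for a family of $k\le N$ intervals, and then pass from $k$ to $N$ via the monotonicity of $t\mapsto t\,\omega(1/t)$, which you correctly derive from concavity together with $\omega(0)=0$. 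Your argument is more elementary and slightly more robust: it needs only concavity and monotonicity of $\omega$ (not differentiability or strict concavity), it gives a genuine proof of the maximality of the equidistributed point rather than merely identifying a critical point, and it handles explicitly the passage from ``at most $N$ intervals with total length $\le 1$'' to the bound $N\omega(1/N)$, a point the paper treats somewhat implicitly. Both arguments yield the same estimate.
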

\begin{proof}
The inequality (\ref{pEstim}) follows immediately from H\"{o}lder's inequality.
For (\ref{holderEstim}), take $N$ intervals $\{I_j\}_{j=1}^N$, then clearly
$$
\sum_{j=1}^N|f(I_j)|\le |f|_{H^\omega}\sum_{j=1}^N\omega(|I_j|).
$$
Define
\begin{equation}
    \label{maximum}
    M_\omega(N)=\max\left(\sum_{j=1}^N\omega(t_j)\right) \quad{\rm subject\,\, to}\quad \sum_{j=1}^N t_j=1,\quad t_j>0.
\end{equation}
Since $\sum_{j=1}^N|I_j|\le 1$ and $\omega$ is non-decreasing, we clearly have
$$
\nu(f;N)\le |f|_{H^\omega}M_\omega(N).
$$
To calculate $M_\omega(N)$, we use Lagrange multipliers. The critical point $(t_1,t_2,...,t_N,\lambda)$ of the Lagrangian function solves
$$
\omega'(t_j)-\lambda=0\quad(1\le j\le N)\quad{\rm and}\quad\sum_{j=1}^Nt_j-1=0.
$$
By the strict concavity of $\omega$, the above system has the unique solution $t_1=t_2=...=t_N$. Hence, the maximum (\ref{maximum}) is 
$$
M_\omega(N)=N\omega\left(\frac{1}{N}\right).
$$
\end{proof}

\end{document}